\newcommand{\CC}{\mathbb{C}}
\newcommand{\FF}{\mathbb{F}}
\newcommand{\NN}{\mathbb{N}}
\newcommand{\RR}{\mathbb{R}}
\newcommand{\CP}{\mathbb{CP}}
\newcommand{\RP}{\mathbb{RP}}
\newcommand{\PP}{\mathbb{P}}
\newcommand{\polm}{\CC_{m-1}[z]}
\newcommand{\Rpolm}{\RR_{m-1}[z]}
\newcommand{\pol}[1]{\CC_{#1}[z]}
\newcommand{\Wr}{\mathrm{Wr}}
\newcommand{\Gr}{\mathrm{Gr}}
\newcommand{\OGpol}{\mathrm{OG}(n,\pol{2n})}
\newcommand{\OGn}{\mathrm{OG}(n,2n{+}1)}
\newcommand{\derz}{{\tfrac{d}{dz}}}
\newcommand{\bfdef}[1]{{\bf \emph{#1}}}
\newtheorem{lemma}{Lemma}
\newtheorem{theorem}[lemma]{Theorem}
\newtheorem{proposition}[lemma]{Proposition}
\title{Reality and transversality for Schubert calculus 
in $\OGn$}
\author{Kevin Purbhoo}
\address{Department of Combinatorics \& Optimization \\
         University of Waterloo \\
         Waterloo, ON, N2L 3G1\\
         CANADA}
\email{kpurbhoo@math.uwaterloo.ca}
\urladdr{http://www.math.uwaterloo.ca/\~{}kpurbhoo}
\thanks{Research partially supported by an NSERC discovery grant.}
\begin{document}

\begin{abstract}
We prove an analogue of the Mukhin-Tarasov-Varchenko theorem 
(formerly
the Shapiro-Shapiro conjecture) for the maximal
type $B_n$ orthogonal Grassmannian $\OGn$.
\end{abstract}

\maketitle

\section{The Mukhin-Tarasov-Varchenko Theorem}
\label{sec:MTV}

For any non-negative integer $k$,
let $\pol{k}$ denote the $(k{+}1)$-dimensional
complex vector space of polynomials of degree at most $k$:
$$\pol{k} := \{f(z) \in \FF[z] \mid \deg f(z) \leq k\}\,.$$
Fix integers $0 \leq d \leq m$, and consider the Grassmannian
$X = \Gr(d,\polm)$, the variety of all $d$-dimensional linear
subspaces of the $m$-dimensional vector space $\polm$.
A point $x \in X$ is \bfdef{real}
if $x$ is is spanned by polynomials in $\Rpolm$; a subset of $S \subset X$ 
is real if every point in $S$ is real.

The Mukhin-Tarasov-Varchenko theorem (formerly the Shapiro-Shapiro
conjecture) asserts that any zero-dimensional intersection of Schubert 
varieties in $X$, relative a special family of flags in $\polm$,
is transverse and real.
This theorem is remarkable for two immediate reasons: first,
it is a rare example of
an algebraic geometry problem in which the solutions are always
provably real; second, the usual arguments to
prove transversality involve Kleiman's transversality theorem~\cite{Kle},
which requires that the Schubert
varieties be defined relative to generic flags.
We recall the most relevant statements here, and
refer the reader to the survey article~\cite{Sot-F}
for a discussion of the history, context, reformulations and
applications of this theorem.

To begin, we define a full flag in $\polm$, for each $a \in \CP^1$: 
$$F_\bullet(a) 
\ :\ \{0\} \subset F_1(a) \subset \dots \subset F_{m-1}(a) \subset \polm\,.$$
If $a \in \CC$, $$F_i(a) := (z+a)^{m-i}\CC[z] \cap \polm$$
is the set of
polynomials in $\polm$ divisible by $(z+a)^{m-i}$.
For $a = \infty$, we set
$F_i(\infty) := \pol{i-1} = \lim_{a \to \infty} F_i(a)$.
The flag $F_\bullet(a)$ is often described as the flag osculating the
rational normal curve $\gamma : \CP^1 \to \PP(\polm)$, 
$\gamma(t) = (z+t)^{m-1}$, 
which simply means that $F_i(a)$ is the span of 
$\{\gamma(a), \gamma'(a), \dots, \gamma^{(i-1)}(a)\}$.

Let $\Lambda = \Lambda_{d,m}$ be the set of all partitions 
$\lambda : (\lambda^1 \geq \dots \geq \lambda^d)$, where
$\lambda^1 \leq m-d$ and $\lambda^d \geq 0$.  We say $\lambda$
is a partition of $k$ and write
$\lambda \vdash k$ or $|\lambda| = k$ if
$k = \lambda^1+ \dots + \lambda^d$.
For every $\lambda \in \Lambda$, the \bfdef{Schubert Variety} in $X$
relative to the flag $F_\bullet(a)$ is
$$X_\lambda(a) 
:= \{x \in X \mid \dim \big(x \cap F_{n-d-\lambda^i+i}(a) \big) \geq i\,,
\text{ for $i=1, \dots, d$}\}\,.$$
The codimension of $X_\lambda(a)$ in $X$ is $|\lambda|$.   

\begin{theorem}[Mukhin-Tarasov-Varchenko~\cite{MTV1, MTV2}]
\label{thm:MTV}
If $a_1, \dots a_s \in \RP^1$ are distinct real points, and
$\lambda_1, \dots \lambda_s \in \Lambda$ are partitions with
$|\lambda_1| + \dots + |\lambda_s| = \dim X$, then the
intersection
$$X_{\lambda_1}(a_1) \cap \dots \cap X_{\lambda_s}(a_s)$$
is finite, transverse, and real.
\end{theorem}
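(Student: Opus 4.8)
The plan is to follow the strategy of Mukhin, Tarasov, and Varchenko, whose essence is to realize the Schubert intersection, \emph{together with its scheme structure}, as the spectrum of a commutative algebra of operators — the Bethe algebra of a Gaudin model — which becomes self-adjoint with respect to a positive definite Hermitian form exactly when the $a_i$ are real. Since the whole configuration is equivariant under the action of $\mathrm{PGL}_2(\RR)$ on $\CP^1$, I would first move the marked points so that $a_1, \dots, a_s \in \RR$. Next I would translate Schubert geometry into the geometry of the Wronski map: for $x \in X$ spanned by $f_1, \dots, f_d \in \polm$, the Wronskian $\Wr(f_1, \dots, f_d)$ is a polynomial of degree $\dim X$, determined by $x$ up to a scalar, and $x \in X_\lambda(a)$ is equivalent to a prescribed ramification of $x$ at $z = -a$, refining the condition that $-a$ be a root of $\Wr(x)$ of multiplicity $|\lambda|$. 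Thus the intersection in the theorem is identified, as a scheme, with a scheme of spaces of polynomials having prescribed ramification over $-a_1, \dots, -a_s$; it is finite because the Wronski map $X \to \PP(\pol{\dim X})$ is a finite morphism, and its length equals a Littlewood--Richardson number.

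To this data I would attach a Gaudin model: a tensor product $M = M_{\lambda_1} \otimes \dots \otimes M_{\lambda_s}$ of irreducible polynomial $\mathfrak{gl}_N$-modules ($N$ large), with the evaluation parameter $-a_i$ placed on the $i$-th factor, restricted to the space $M^{\mathrm{sing}}$ of singular vectors of the appropriate weight $\mu$, whose dimension is precisely that same Littlewood--Richardson number. On $M^{\mathrm{sing}}$ acts the Bethe algebra $\mathcal{B}$, the commutative algebra generated by the higher Gaudin Hamiltonians; these are the coefficients of a scalar differential operator built from the $\mathfrak{gl}_N$-currents, and their lowest nontrivial members are the classical Gaudin Hamiltonians $H_i = \sum_{j \neq i} \Omega_{ij}/(a_i - a_j)$, where $\Omega$ is the Casimir tensor acting on the $i$-th and $j$-th factors.

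The technical heart is the identification of schemes $X_{\lambda_1}(a_1) \cap \dots \cap X_{\lambda_s}(a_s) \cong \mathrm{Spec}\,\overline{\mathcal{B}}$, where $\overline{\mathcal{B}} \subset \mathrm{End}(M^{\mathrm{sing}})$ is the image of the Bethe algebra. One direction is the Bethe ansatz: each solution of the Bethe equations yields a joint eigenvector of $\mathcal{B}$ whose eigenvalues are the coefficients of the fundamental scalar differential operator of a point of the Schubert intersection. The converse — completeness of the Bethe ansatz together with the matching of the two scheme structures — I would obtain from the theorem that $\mathcal{B}$ is a maximal commutative subalgebra, so that $M^{\mathrm{sing}}$ is free of rank one over $\overline{\mathcal{B}}$; this makes $\mathrm{Spec}\,\overline{\mathcal{B}}$ the Schubert scheme on the nose, and shows that it is reduced if and only if $\overline{\mathcal{B}}$ is semisimple.

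Reality and transversality then follow from one input. When $a_1, \dots, a_s \in \RR$, each $M_{\lambda_i}$ carries a positive definite contravariant (Shapovalov) Hermitian form; their tensor product is positive definite on $M^{\mathrm{sing}}$; and because $\Omega$ is symmetric for this form while the coefficients $1/(a_i - a_j)$ are real, every Gaudin Hamiltonian — hence all of $\mathcal{B}$ — is self-adjoint. A commuting family of self-adjoint operators on a finite-dimensional Euclidean space is simultaneously diagonalizable over $\RR$, so $\overline{\mathcal{B}}$ is semisimple with real spectrum: semisimplicity gives transversality (the intersection scheme, already finite and zero-dimensional by the count $\sum |\lambda_i| = \dim X$, is reduced), and reality of the spectrum, with eigenvectors chosen real, shows that the fundamental differential operator — equivalently the polynomials spanning — each intersection point has real coefficients, i.e. the intersection is real. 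The main obstacle is the middle step: proving that the Bethe algebra is maximal commutative and that its action reproduces the Schubert scheme structure. This is where the full MTV apparatus is needed — the universal differential operator, the Gaudin--dynamical dualities, and the identification of joint spectra with spaces of polynomials. By comparison the positivity of the Shapovalov form for real parameters is elementary, but it is precisely the lynchpin converting the algebraic identification into the reality and transversality assertions.
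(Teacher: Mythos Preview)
The paper does not prove Theorem~\ref{thm:MTV} at all: it is stated with attribution to~\cite{MTV1, MTV2} and then used as a black box in the proof of Theorem~\ref{thm:maintheorem}. So there is no ``paper's own proof'' to compare against; the only thing to check is whether your sketch is a faithful outline of the argument in the cited references.

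On that score your outline is essentially the strategy of~\cite{MTV2}: identify the Schubert intersection scheme with the spectrum of the image of the Bethe algebra of a Gaudin model acting on the appropriate weight space of singular vectors, and then observe that for real evaluation parameters this algebra consists of operators self-adjoint for the tensor Shapovalov form, forcing semisimplicity (hence transversality) and real spectrum (hence reality). You correctly flag the hard step --- proving that the Bethe algebra acts cyclically (equivalently, that $M^{\mathrm{sing}}$ is free of rank one over $\overline{\mathcal{B}}$) so that $\mathrm{Spec}\,\overline{\mathcal{B}}$ recovers the full scheme structure of the intersection --- and you are right that this is where the substance of~\cite{MTV2} lies. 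One small caution: the self-adjointness of the \emph{higher} Gaudin Hamiltonians does not follow from the elementary computation you give for the quadratic $H_i$; one needs either the explicit formulas for the generators of $\mathcal{B}$ or an argument that the whole algebra is stable under the adjoint, which is part of what is established in the references. With that caveat, your plan matches the cited proof; in the context of this paper, however, no proof is expected --- the theorem is simply quoted.
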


In~\cite{Sot-F}, Sottile conjectured an
analogue of Theorem~\ref{thm:MTV} for $\OGn$,
the maximal orthogonal Grassmannian in type $B_n$.
In Section~\ref{sec:OG} of this note, we give a proof of this
conjecture (our Theorem~\ref{thm:maintheorem}).  
We discuss some of its consequences in Section~\ref{sec:more}; 
in particular, we
note that Theorem~\ref{thm:maintheorem} should yield a 
geometric proof of the Littlewood-Richardson rule for~$\OGn$.


\section{The theorem for $\OGn$}
\label{sec:OG}


Fix a positive integer $n$, and consider
the non-degenerate symmetric bilinear form
$\langle \cdot, \cdot \rangle$ on the $(2n+1)$-dimensional vector space
$\pol{2n}$ given by
$$\Big\langle \sum_{k=0}^{2n} a_k \tfrac{z^k}{k!}\,,\,
\sum_{\ell=0}^{2n} b_\ell \tfrac{z^\ell}{\ell!} \Big\rangle
 = \sum_{m =0}^{2n} (-1)^m a_m b_{2n-m}\,.$$
Let $Y = \OGpol)$ be the orthogonal Grassmannian in
$\pol{2n}$, which is the variety of all $n$-dimensional isotropic 
subspaces of $\pol{2n}$.  
The dimension of $Y$ is $\frac{n(n+1)}{2}$.

The definition of a Schubert variety in $Y$
requires our reference flags to be orthogonal.
The bilinear form on $\pol{2n}$ has been chosen so that this is
true for the flags $F_\bullet(a)$.

\begin{proposition}
For $a \in \CP^1$, then the flag $F_\bullet(a)$ is an orthogonal flag;
that is $F_i(a)^\perp = F_{2n+1-i}(a)$, for $i=0, \dots, 2n+1$.
\end{proposition}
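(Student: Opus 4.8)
The plan is to reduce the orthogonality of $F_\bullet(a)$ to a single elementary identity for the pairing of powers $(z+a)^p$, $(z+a)^q$. The cases $i=0$ and $i=2n+1$ are trivial, since then $F_i(a)$ is $\{0\}$ or all of $\pol{2n}$, whose perpendicular spaces are $\pol{2n}=F_{2n+1}(a)$ and $\{0\}=F_0(a)$ respectively; so fix $1\le i\le 2n$. Because $\langle\cdot,\cdot\rangle$ is non-degenerate, $\dim F_i(a)^\perp=(2n+1)-\dim F_i(a)=(2n+1)-i=\dim F_{2n+1-i}(a)$, so it is enough to prove the inclusion $F_{2n+1-i}(a)\subseteq F_i(a)^\perp$, i.e.\ that $\langle F_i(a),F_{2n+1-i}(a)\rangle=0$.

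For this I would use the osculating description of the flags: for $a\in\CC$, $F_i(a)=\mathrm{span}\{(z+a)^{2n},(z+a)^{2n-1},\dots,(z+a)^{2n-i+1}\}$ and $F_{2n+1-i}(a)=\mathrm{span}\{(z+a)^{2n},\dots,(z+a)^{i}\}$, while $F_i(\infty)=\pol{i-1}=\mathrm{span}\{1,z,\dots,z^{i-1}\}$ and $F_{2n+1-i}(\infty)=\pol{2n-i}$. Thus everything comes down to the following claim: for all $a\in\CC$ and all $0\le p,q\le 2n$ one has $\langle(z+a)^p,(z+a)^q\rangle=0$ unless $p+q=2n$ (and in the $a=\infty$ case $\langle z^p,z^q\rangle=0$ unless $p+q=2n$, which is immediate from the definition of the form).

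To prove the claim I would write $(z+a)^p$ and $(z+a)^q$ in the basis $\{z^k/k!\}$ via the binomial theorem (the coefficient of $z^k/k!$ in $(z+a)^p$ is $p!\,a^{p-k}/(p-k)!$ for $k\le p$ and $0$ otherwise), substitute into the defining formula for $\langle\cdot,\cdot\rangle$, and reindex. Setting $N=p+q-2n$, the double sum collapses to $\pm\,p!\,q!\,a^{N}/N!$ times $\sum_{j=0}^{N}(-1)^j\binom{N}{j}$ when $N\ge 0$, and it is empty when $N<0$; since $\sum_{j=0}^{N}(-1)^j\binom{N}{j}=(1-1)^N$ vanishes for $N\ge 1$, the pairing is zero unless $N=0$, which is the claim. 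Granting it, the proposition follows: every spanning vector $(z+a)^p$ of $F_i(a)$ has $p\ge 2n-i+1$ and every spanning vector $(z+a)^q$ of $F_{2n+1-i}(a)$ has $q\ge i$, hence $p+q\ge 2n+1>2n$ and all such pairings vanish; together with the dimension count this yields $F_i(a)^\perp=F_{2n+1-i}(a)$, and the $a=\infty$ case is identical with $z^p,z^q$ in place of $(z+a)^p,(z+a)^q$.

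I do not anticipate a genuine obstacle: the mathematical content is a single vanishing alternating binomial sum, and the only point needing care is organizational, namely working with the spanning set $\{(z+a)^p\}$ rather than a monomial basis so that the dependence on $a$ factors out as $a^{p+q-2n}$ and the sum $\sum_j(-1)^j\binom{N}{j}$ becomes visible. A more conceptual alternative would be to verify the statement for one value of $a$ and then transport it across $\CP^1$ using that $\gamma$ is $SL_2(\CC)$-equivariant and that $\langle\cdot,\cdot\rangle$ is, up to scalar, the unique $SL_2$-invariant symmetric bilinear form on $\pol{2n}\cong\mathrm{Sym}^{2n}\CC^2$; but checking that invariance costs about as much as the direct computation, so I would prefer the elementary route above.
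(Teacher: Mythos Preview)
Your argument is correct: the dimension count reduces the claim to $\langle F_i(a),F_{2n+1-i}(a)\rangle=0$, and your evaluation of $\langle (z+a)^p,(z+a)^q\rangle$ via the binomial expansion and the alternating sum $\sum_{j=0}^N(-1)^j\binom{N}{j}=(1-1)^N$ is accurate (one checks $N=p+q-2n\le p$ since $q\le 2n$, so the index range is exactly $0\le j\le N$ as you claim).

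The paper, however, takes a shorter and more conceptual route that is worth contrasting with yours. It verifies the statement directly for $a=0$ and $a=\infty$ (immediate from the definition of the form), and then transports the $a=0$ case to arbitrary $a\in\CC$ by observing that $\tfrac{d}{dz}$ is skew-symmetric with respect to $\langle\cdot,\cdot\rangle$; hence $\exp\!\big(a\,\tfrac{d}{dz}\big)$ is an orthogonal operator, and since $\exp\!\big(a\,\tfrac{d}{dz}\big)f(z)=f(z+a)$ the form is translation-invariant. This is precisely a one-parameter-subgroup version of the $SL_2$-invariance idea you mention at the end, but it is cheaper: checking that $\tfrac{d}{dz}$ is skew is a one-line computation on the basis $\{z^k/k!\}$, whereas your binomial identity, while elementary, hides this structure inside the cancellation $\sum_j(-1)^j\binom{N}{j}=0$. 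Your computation has the modest advantage of giving the explicit value $\langle (z+a)^p,(z+a)^{2n-p}\rangle=(-1)^p\,p!\,(2n-p)!$ when $p+q=2n$, but for the proposition itself the paper's invariance argument is cleaner.
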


\begin{proof}
For $a=0, \infty$, this is straightforward to verify.  We deduce
the result for all other $a$ by showing that
$\langle f(z), g(z) \rangle = \langle f(z+a), g(z+a)\rangle$.

To see this, note that
$\langle \derz(\tfrac{z^k}{k!}) , \tfrac{z^\ell}{\ell!} \rangle
= - \langle \tfrac{z^k}{k!} , \derz(\tfrac{z^\ell}{\ell!}) \rangle$,
so $\derz$ is a skew-symmetric operator 
on $\pol{2n}$.  It follows that
$\exp(a \frac{d}{dz})$ is an orthogonal operator
on $\pol{2n}$ and so
$\langle f(z+a), g(z+a)\rangle =
\langle \exp(a \derz)f(z), \exp(a \derz)g(z)\rangle =
\langle f(z), g(z) \rangle$.
\end{proof}

The Schubert varieties in $Y$ are indexed by the set
$\Sigma$ of all {\em strict partitions}
$\sigma :  (\sigma^1> \sigma^2 > \dots > \sigma^k)$,  with
$\sigma^1 \leq n$, $\sigma^k > 0$, $k \leq n$. 
For convenience, we put 
$\sigma^j = 0$ for $j  > k$.  We associate to $\sigma$
a decreasing sequence of integers,
$\overline \sigma^1 >  \dots > \overline \sigma^n$,
such that $\overline \sigma^i = \sigma^i$ if
$\sigma^i > 0$, and 
$\{|\overline \sigma^1|, \dots, |\overline \sigma^n|\} 
= \{1, \dots,n\}$.
It is not hard to see that $\overline \sigma^i$ is given explicitly
by the formula
$$\overline \sigma^i = 
\sigma^i-i + \#\{j \in \NN \mid j \leq i < j+\sigma^j\}\,.$$
For $\sigma \in \Sigma$, the \bfdef{Schubert variety} in $Y$ 
relative to the flag $F_\bullet(a)$ is defined to be
$$Y_\sigma(a) 
:= \{y \in Y \mid 
\dim \big(y \cap F_{1+n-\overline \sigma^i}(a)\big) \geq i\,,
\text{ for $i=1, \dots, n$}\}\,.$$
The codimension of $Y_\sigma(a)$ in $Y$ is $|\sigma|$.
We refer the reader to~\cite{FP, Sot-OG} for further details.

\begin{theorem}
\label{thm:maintheorem}
If $a_1, \dots a_s \in \RP^1$ are distinct real points, and
$\sigma_1, \dots \sigma_s \in \Sigma$, with  
$|\sigma_1| + \dots + |\sigma_s| = \dim Y$, then the
intersection
$$Y_{\sigma_1}(a_1) \cap \dots \cap Y_{\sigma_s}(a_s)$$
is finite, transverse, and real.
\end{theorem}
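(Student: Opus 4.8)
The plan is to deduce Theorem~\ref{thm:maintheorem} from Theorem~\ref{thm:MTV} by means of the inclusion of $Y$, as the closed subvariety of isotropic subspaces, into the ordinary Grassmannian $X := \Gr(n, \pol{2n})$. The key observation is that each orthogonal Schubert variety $Y_\sigma(a)$ is cut out of $Y$ by exactly the same rank conditions that define an ordinary Schubert variety $X_{\lambda(\sigma)}(a) \subseteq X$, and that the correspondence $\sigma \mapsto \lambda(\sigma)$ doubles codimensions in just the way needed: since $\dim X = n(n+1) = 2\dim Y$, the total codimension of the relevant Schubert varieties in $X$ is again $\dim X$, so Theorem~\ref{thm:MTV} becomes applicable.

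Concretely, for $\sigma \in \Sigma$ I would put $\lambda(\sigma)^i := \overline\sigma^i + i$ for $i = 1,\dots,n$. From $\overline\sigma^1 \leq n$, $\overline\sigma^n \geq -n$ and $\overline\sigma^i > \overline\sigma^{i+1}$ it follows at once that $\lambda(\sigma) = (\lambda(\sigma)^1 \geq \dots \geq \lambda(\sigma)^n)$ belongs to $\Lambda_{n, 2n+1}$, and a one-line manipulation of the explicit formula for $\overline\sigma^i$ gives $\sum_i \overline\sigma^i = 2|\sigma| - \tfrac{n(n+1)}{2}$, hence $|\lambda(\sigma)| = \sum_i(\overline\sigma^i + i) = 2|\sigma|$. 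Comparing definitions, the condition $\dim\bigl(x \cap F_{n+1-\lambda(\sigma)^i+i}(a)\bigr) \geq i$ defining $X_{\lambda(\sigma)}(a)$ is literally $\dim\bigl(x \cap F_{n+1-\overline\sigma^i}(a)\bigr) \geq i$, which is the condition defining $Y_\sigma(a)$. Therefore $Y_\sigma(a) = Y \cap X_{\lambda(\sigma)}(a)$ for all $\sigma \in \Sigma$ and $a \in \RP^1$.

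Granting this, the hypothesis $|\sigma_1| + \dots + |\sigma_s| = \dim Y$ translates into $|\lambda(\sigma_1)| + \dots + |\lambda(\sigma_s)| = 2\dim Y = \dim X$, so Theorem~\ref{thm:MTV} tells us that $Z := X_{\lambda(\sigma_1)}(a_1) \cap \dots \cap X_{\lambda(\sigma_s)}(a_s)$ is finite, transverse, and real. Since $Y_{\sigma_1}(a_1) \cap \dots \cap Y_{\sigma_s}(a_s) = Y \cap Z$, this intersection is a subset of the finite real set $Z$, hence finite and real; it remains only to establish transversality in $Y$. Let $y$ be a point of $Y \cap Z$. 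The Schubert varieties $Y_{\sigma_j}(a_j)$ and $X_{\lambda(\sigma_j)}(a_j)$ are reduced, and $Y_{\sigma_j}(a_j) = Y \cap X_{\lambda(\sigma_j)}(a_j)$, so $T_y Y_{\sigma_j}(a_j) \subseteq T_y Y \cap T_y X_{\lambda(\sigma_j)}(a_j)$. Intersecting over $j$, and using that transversality of $Z$ at $y$ together with $\sum_j |\lambda(\sigma_j)| = \dim X$ forces $\bigcap_j T_y X_{\lambda(\sigma_j)}(a_j) = 0$, we conclude $\bigcap_j T_y Y_{\sigma_j}(a_j) = 0$ inside $T_y Y$. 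Since $\sum_j \operatorname{codim}_Y Y_{\sigma_j}(a_j) = \sum_j |\sigma_j| = \dim Y$, this is only possible if each $Y_{\sigma_j}(a_j)$ is smooth at $y$ and the $Y_{\sigma_j}(a_j)$ meet transversally there; this is exactly transversality of the intersection, and the theorem follows.

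The serious input is of course Theorem~\ref{thm:MTV} itself; beyond it, the only point that requires care is that $Y$ is \emph{not} transverse to an individual $X_{\lambda(\sigma)}(a)$ inside $X$ — the identity $Y_\sigma(a) = Y \cap X_{\lambda(\sigma)}(a)$ is an excess intersection, visible already in $\operatorname{codim}_Y Y_\sigma(a) = |\sigma|$ versus $\operatorname{codim}_X X_{\lambda(\sigma)}(a) = 2|\sigma|$ (the geometric trace of the powers of $2$ in orthogonal Schubert calculus). Consequently one cannot build up the intersection by slicing with $Y$ one factor at a time. What rescues the argument is that transversality of the \emph{full}, zero-dimensional intersection $Z$ in $X$ is a strictly stronger statement, and is precisely enough to kill every tangent direction at $y$ simultaneously, after which transversality in $Y$ comes for free.
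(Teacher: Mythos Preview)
Your proof is correct and follows essentially the same route as the paper: define $\widetilde\sigma^i=\overline\sigma^i+i$ (your $\lambda(\sigma)$), check $|\widetilde\sigma|=2|\sigma|$ and $Y_\sigma(a)=Y\cap X_{\widetilde\sigma}(a)$, then invoke Theorem~\ref{thm:MTV}. The only cosmetic difference is in the transversality step: the paper observes that intersecting the closed subscheme $Y$ with the zero-dimensional \emph{reduced} scheme $Z$ yields a reduced (hence, being proper, transverse) intersection, whereas you unpack the same fact via the tangent-space inequality $\bigcap_j T_y Y_{\sigma_j}(a_j)\subseteq\bigcap_j T_y X_{\widetilde\sigma_j}(a_j)=0$; these are equivalent arguments.
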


\begin{proof}
Let $X = \Gr(n,\pol{2n})$, 
and let $\Lambda = \Lambda_{n,2n+1}$.
We prove this result by viewing $Y$ as a subvariety
of $X$, and the Schubert varieties $Y_\sigma$ as the intersections
of Schubert varieties in $X$ with $Y$.  Note that 
$\dim X = 2\dim Y = n(n+1)$.

For a strict partition
$\sigma \in \Sigma$, let
$$
\widetilde \sigma^i := \overline \sigma^i + i 
= \sigma^i + \#\{j \in \NN \mid j \leq i < j+\sigma^j\} \,.
$$
Observe that 
$\widetilde \sigma^i - \widetilde \sigma^{i+1} = 
\overline\sigma^i - \overline \sigma^{i+1} -1 \geq 0$,
and
$\widetilde \sigma^1 \leq \sigma^1+1 \leq n+1$;
hence we see that
$$\widetilde \sigma \ :\ (\widetilde \sigma^1 \geq \widetilde \sigma^2 \geq 
\dots \geq 
\widetilde \sigma^n)$$ 
is a partition in $\Lambda$.

It follows directly from the
definitions of Schubert varieties in $X$ and $Y$ that
$$X_{\widetilde\sigma}(a) \cap Y = Y_\sigma(a)\,.$$
Moreover, we have,
\begin{align*}
|\widetilde \sigma| &= 
|\sigma| + \sum_{i \geq 1}
\#\{j \in \NN \mid j \leq i < j+\sigma^j\}\\
&= |\sigma| + \sum_{j \geq 1} \#\{i \in \NN \mid j \leq i < j+\sigma^j\} \\
&= |\sigma| + \sum_{j \geq 1} \sigma^j 
\ =\ 2|\sigma|\,.
\end{align*}

Thus, if
$|\sigma_1| + \dots + |\sigma_s| = \dim Y$, then
$|\widetilde \sigma_1| + \dots + |\widetilde \sigma_s| = 2\dim Y = \dim X$,
and so by Theorem~\ref{thm:MTV} the intersection 
$$X_{\widetilde \sigma_1}(a_1) \cap \dots \cap X_{\widetilde \sigma_s}(a_s)$$
is finite, transverse, and real;
in particular this intersection is a zero-dimensional reduced scheme.
It follows immediately that
$$Y_{\sigma_1}(a_1) \cap \dots \cap Y_{\sigma_s}(a_s) =
Y \cap X_{\widetilde \sigma_1}(a_1) \cap \dots 
\cap X_{\widetilde \sigma_s}(a_s)$$ 
is finite and real.  To see that the intersection on the left hand side
is also transverse, note that it is proper, so it suffices to
show that it is scheme-theoretically reduced.  But this is immediate
from the fact that the right hand side
is the intersection of $Y$ with a zero-dimensional reduced scheme.
\end{proof}


\section{Consequences}
\label{sec:more}


Let $0 \leq d \leq m$, $X = \Gr(d, \polm)$, be as 
in Section~\ref{sec:MTV}.
We can consider the Wronskian
of $d$ polynomials $f_1(z), \dots, f_d(z) \in \polm$:
$$\Wr_{f_1, \dots, f_d}(z) :=
\begin{vmatrix}
f_1(z) & \cdots & f_d(z)\\
f_1'(z) & \cdots  & f_d'(z) \\
\vdots &  \vdots & \vdots \\
f_1^{(d-1)}(z) & \cdots & f_d^{(d-1)}(z)
\end{vmatrix} \,.$$
This is a polynomial of degree at most $\dim X = d(n-d)$.
If $f_1, \dots, f_d$ are
linearly dependent, the Wronskian is zero; otherwise up to a constant
multiple, $\Wr_{f_1, \dots, f_d}(z)$ depends only on the linear span
of $f_1(z), \dots, f_d(z)$ in  $\polm$.  Thus the
Wronskian gives us a well defined morphism of schemes
$\Wr: X \to \PP(\pol{d(n-d)})$, called the \bfdef{Wronski map}.  
This morphism is flat and finite~\cite{EH}.
For
$x \in X$ we will write $\Wr(x;z)$ for any representative of $\Wr(x)$ in
$\pol{d(n-d)}$.

The Wronski map has a deep connection to the Schubert varieties 
on $X$ relative to the flags $F_\bullet(a)$, $a \in \CP^1$.  A proof
of the following classical result may be found in~\cite{EH,Pur,Sot-F}.
\begin{theorem}
\label{thm:Wrschubert}
The Wronksian $\Wr(x;z)$ is divisible by 
$(z+a)^k$ if and only if $x \in X_\lambda(a)$ for some partition
$\lambda \vdash k$.  Also, $x \in X_\mu(\infty)$ for
some $\mu \vdash \big(\dim X - \deg \Wr(x;z)\big)$.
\end{theorem}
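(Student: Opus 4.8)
The plan is to attach to each $x \in X$ and each $a \in \CP^1$ a single combinatorial invariant that simultaneously governs the Schubert conditions at $a$ and the local behaviour of $\Wr(x;z)$ at $z=-a$ (at $z=\infty$ when $a=\infty$). I begin with $a$ finite. The set of orders of vanishing $\{\operatorname{ord}_{-a} f : 0 \neq f \in x\}$ consists of exactly $d$ distinct integers $0 \le e_1 < \cdots < e_d \le m-1$, realised by some basis $f_1,\dots,f_d$ with $\operatorname{ord}_{-a} f_l = e_l$; since these orders are distinct, no leading terms can cancel, so $\operatorname{ord}_{-a}\big(\sum_l c_l f_l\big) = \min\{e_l : c_l \neq 0\}$. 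As $f \in F_j(a)$ iff $\operatorname{ord}_{-a} f \ge m-j$, this gives $\dim\big(x \cap F_j(a)\big) = \#\{l : e_l \ge m-j\}$, so the condition $\dim\big(x \cap F_{m-d-\lambda^i+i}(a)\big) \ge i$ defining $X_\lambda(a)$ is equivalent to $e_{d+1-i} \ge d+\lambda^i-i$, i.e.\ to $\lambda^i \le e_{d+1-i}-(d-i)$. Hence there is a distinguished partition $\lambda(x,a) \in \Lambda$, namely $\lambda(x,a)^i := e_{d+1-i}-(d-i)$, characterised by: $x \in X_\lambda(a)$ iff $\lambda \subseteq \lambda(x,a)$ componentwise; moreover $|\lambda(x,a)| = \sum_l e_l - \binom{d}{2}$.

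The crux is then the identity $\operatorname{ord}_{-a}\Wr(x;z) = \sum_l e_l - \binom{d}{2} = |\lambda(x,a)|$. Translating so that $-a=0$ and writing $f_l = \sum_{p \ge e_l} a_{l,p}z^p$ with $a_{l,e_l}\neq 0$, I would expand $\Wr_{f_1,\dots,f_d}(z) = \det\big(f_i^{(j-1)}(z)\big)$ by multilinearity: the term corresponding to a choice $p_l \ge e_l$ $(l=1,\dots,d)$ equals $\big(\prod_l a_{l,p_l}\big)\, z^{\sum_l p_l - \binom{d}{2}}\, \det_{i,j}\big((p_i)_{j-1}\big)$, where $(p)_k$ denotes the falling factorial and the last determinant reduces by column operations to the Vandermonde $\prod_{i<i'}(p_{i'}-p_i)$. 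The smallest exponent $\sum_l e_l - \binom{d}{2}$ is attained only when $p_l = e_l$ for every $l$ (once the sum is minimal the inequalities $p_l \ge e_l$ force equality), so there is no cancellation and the coefficient of that power is $\big(\prod_l a_{l,e_l}\big)\prod_{i<i'}(e_{i'}-e_i) \neq 0$. Combining with the previous paragraph: $(z+a)^k$ divides $\Wr(x;z)$ iff $|\lambda(x,a)| \ge k$ iff $\lambda(x,a)$ contains some partition $\lambda \vdash k$ iff $x \in X_\lambda(a)$ for some $\lambda \vdash k$, which is the first assertion.

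For $a=\infty$, where $F_i(\infty) = \pol{i-1}$, the role of ``order of vanishing'' is played by ``degree'': $\dim\big(x \cap F_i(\infty)\big) = \#\{l : d_l \le i-1\}$, where $0 \le d_1 < \cdots < d_d \le m-1$ are the degrees realised by elements of $x$. Applying the linear involution $f(z)\mapsto z^{m-1}f(1/z)$ of $\polm$ (under which $\operatorname{ord}_0$ of the image is $m-1$ minus the degree, and $\Wr$ changes by an explicit power of $z$ coming from the chain rule) reduces the computation of $\deg\Wr(x;z)$ to the finite case already handled, and yields $\deg\Wr(x;z) = \sum_l d_l - \binom{d}{2}$. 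Then the partition $\mu \in \Lambda$ with $\mu^i := (m-d) - \big(d_i-(i-1)\big)$ satisfies $x \in X_\mu(\infty)$ — its definition is exactly the tight case of the conditions $\dim\big(x \cap F_{m-d-\mu^i+i}(\infty)\big) \ge i$ — and $|\mu| = d(m-d) - \sum_l d_l + \binom{d}{2} = \dim X - \deg\Wr(x;z)$, as claimed.

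The only genuine obstacle is the non-vanishing of the leading coefficient in the second step; everything else is bookkeeping relating vanishing/degree sequences, partitions, and the index shifts in the definition of $X_\lambda(a)$. The point to get right is that in the multilinear expansion of the Wronskian the lowest-degree monomial comes from a single term, whose coefficient is a nonzero Vandermonde determinant — so the order of vanishing of $\Wr(x;z)$ is pinned down exactly, not just bounded.
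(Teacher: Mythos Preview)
The paper does not actually give a proof of this theorem; it is stated as a classical result with a pointer to \cite{EH,Pur,Sot-F}. Your argument is correct and is essentially the standard one found in those references: attach to $x$ its vanishing sequence $(e_1<\dots<e_d)$ at $-a$ (respectively its degree sequence at $\infty$), observe that this sequence simultaneously names the Schubert cell containing $x$ and, via the Vandermonde identity $\Wr(z^{p_1},\dots,z^{p_d})=z^{\sum p_l-\binom{d}{2}}\prod_{l<l'}(p_{l'}-p_l)$, pins down $\operatorname{ord}_{-a}\Wr(x;z)=\sum e_l-\binom{d}{2}=|\lambda(x,a)|$. The only cosmetic remark is that the detour through the involution $f(z)\mapsto z^{m-1}f(1/z)$ for $a=\infty$ is unnecessary --- the same multilinear expansion already shows directly that $\deg\Wr(x;z)=\sum d_l-\binom{d}{2}$ by looking at the top rather than the bottom monomial --- but your reduction is equally valid.
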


For $X = \Gr(n,\pol{2n})$, and $Y = \OGpol$ we deduce the
following analogue:
\begin{theorem}
\label{thm:Pschubert}
If $y \in Y$ then $\Wr(y;z) = P(y;z)^2$ for some polynomial 
$P(y;z) \in \pol{n(n+1)/2}$.  
$P(y;z)$ is divisible by $(z+a)^k$ if and
only if $y \in Y_\sigma(a)$ for some strict partition 
$\sigma \vdash k$ in $\Sigma$.  Also, $y \in Y_\tau(\infty)$ for
some strict partition $\tau \vdash \big(\dim Y - \deg P(y;z)\big)$.
\end{theorem}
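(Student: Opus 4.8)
The plan is to leverage Theorem~\ref{thm:maintheorem} and its proof, together with Theorem~\ref{thm:Wrschubert} and the relation $X_{\widetilde\sigma}(a) \cap Y = Y_\sigma(a)$ established above. The first and most substantial point to establish is that $\Wr(y;z)$ is a perfect square for every $y \in Y$. I would approach this via a density/reality argument. Theorem~\ref{thm:maintheorem} (or rather Theorem~\ref{thm:Wrschubert} applied to the enlarged partitions $\widetilde\sigma$) tells us that whenever $\Wr(y;z)$ has a root at $z = -a$ of multiplicity $k$, then $y \in X_\lambda(a)$ for some $\lambda \vdash k$; but if in addition $y \in Y$, then $y \in X_{\widetilde\sigma}(a) \cap Y = Y_\sigma(a)$ forces $\lambda = \widetilde\sigma$ for some strict $\sigma$, whence $k = |\widetilde\sigma| = 2|\sigma|$ is even. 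Thus every finite root of $\Wr(y;z)$ has even multiplicity; similarly, $y \in X_\mu(\infty)$ with $\mu = \widetilde\tau$ gives $\dim X - \deg\Wr(y;z) = 2|\tau|$ even, and since $\dim X = n(n+1)$ is even, $\deg\Wr(y;z)$ is even. A polynomial all of whose roots have even multiplicity and whose degree is even is a square (up to a scalar), so $\Wr(y;z) = c\,P(y;z)^2$; rescaling $P$ absorbs $c$, and one checks $\deg P \le \dim Y = n(n+1)/2$.

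Having established that $\Wr(y;z) = P(y;z)^2$, the divisibility statements follow by matching up the even-multiplicity data. A root of $P(y;z)$ at $z=-a$ of multiplicity $k$ is exactly a root of $\Wr(y;z)$ of multiplicity $2k$; by Theorem~\ref{thm:Wrschubert} this is equivalent to $y \in X_\lambda(a)$ for some $\lambda \vdash 2k$, and intersecting with $Y$ (as above) this is equivalent to $y \in X_{\widetilde\sigma}(a) \cap Y = Y_\sigma(a)$ for some strict $\sigma$ with $|\widetilde\sigma| = 2k$, i.e. $|\sigma| = k$. The statement at infinity is analogous: $\dim Y - \deg P(y;z) = \tfrac12(\dim X - \deg\Wr(y;z))$, and $y \in X_\mu(\infty)$ with $\mu \vdash (\dim X - \deg\Wr(y;z)) = 2(\dim Y - \deg P)$, combined with $y \in Y$, yields $\mu = \widetilde\tau$ for a strict $\tau \vdash (\dim Y - \deg P(y;z))$.

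The main obstacle is the parity argument for individual roots: Theorem~\ref{thm:Wrschubert} only guarantees \emph{some} $\lambda \vdash k$ with $y \in X_\lambda(a)$, not that this $\lambda$ is of the form $\widetilde\sigma$. The key is that for $y \in Y$, if $y$ lies in $X_\lambda(a)$ at all then it lies in the unique largest such stratum, and one must argue that this maximal $\lambda$ with $y \in X_\lambda(a)$ is forced to be $\widetilde\sigma$ for a strict $\sigma$. This follows from the fact that the $Y_\sigma(a)$ stratify $Y$ (every $y \in Y$ lies in a unique open Schubert cell $Y_\sigma(a)^\circ$), together with $Y_\sigma(a) = X_{\widetilde\sigma}(a) \cap Y$ and $X_\lambda(a) \cap Y \ne \emptyset \Rightarrow \lambda = \widetilde\sigma$ for the relevant strict $\sigma$; the self-duality of $Y$ under the bilinear form is what makes the non-$\widetilde\sigma$ Schubert cells of $X$ miss $Y$ entirely in the relevant codimension. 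A clean way to phrase this is: $\mathrm{ord}_{z=-a}\Wr(y;z) = \max\{|\lambda| : y \in X_\lambda(a)\} = \max\{|\widetilde\sigma| : y \in Y_\sigma(a)\} = 2\max\{|\sigma| : y \in Y_\sigma(a)\} = 2\,\mathrm{ord}_{z=-a}P(y;z)$, with the middle equality being the content to verify carefully.
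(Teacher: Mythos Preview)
Your approach is essentially the paper's: show that for each $a$ the exact order of vanishing of $\Wr(y;z)$ at $-a$ is $|\widetilde\kappa| = 2|\kappa|$ by using that the Schubert \emph{cells} of $X$ restrict to Schubert cells of $Y$, so the unique cell $X^\circ_\lambda(a)$ containing $y$ must satisfy $\lambda = \widetilde\kappa$ for some strict $\kappa$. Two small clean-ups: the phrase ``density/reality argument'' and the reference to Theorem~\ref{thm:maintheorem} are red herrings (neither is used, here or in the paper), and your implication ``$X_\lambda(a)\cap Y\neq\emptyset\Rightarrow\lambda=\widetilde\sigma$'' is false as stated for varieties --- you need it for cells, which you do say elsewhere.
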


\begin{proof}
Let $y \in Y$, and let $(z+a)^\ell$ be the largest power $(z+a)$ 
that divides $\Wr(x;z)$.  By Theorem~\ref{thm:Wrschubert},
there exists a partition $\lambda \vdash \ell$ 
such that $y \in X_\lambda(a)$.  Since $\ell$ is maximal, $y$ 
is in the Schubert cell
\begin{align*}
X^\circ_\lambda(a) 
&:= \big\{x \in X \ \big|\ \dim \big(x \cap F_k(a) 
\big) \geq i,\ %
n{+}1{-}\lambda^i{+}i \leq k \leq n{+}1{-}\lambda^{i+1}{+}i,\,
0 \leq i \leq n\big\} \\
&= X_\lambda(a) \setminus \bigg(
\bigcup_{|\mu| > |\lambda|} X_\mu(a)
\bigg)\,.
\end{align*}
(Here, by convention, $\lambda^0 = n+1$, $\lambda^{n+1} =0$.)
The Schubert cells in $Y$ are of the form
\begin{align*}
Y^\circ_\sigma(a) 
&:= \big\{y \in Y \ \big|\ \dim \big(y \cap F_k(a) \big) \geq i,\ %
n{+}1{-}\overline \sigma^i \leq k \leq n{-}\overline \sigma^{i+1},\,
0 \leq i \leq n \big\}  \\
&= X_{\widetilde \sigma}(a) \cap Y
\end{align*}
(Here, by convention, $\overline \sigma^0 = n+1$, 
$\overline \sigma^{n+1} = -n-1$.)
Now, the intersection $X^\circ_\lambda(a) \cap Y$ is nonempty, 
since it contains $y$,
and is therefore a Schubert cell in $Y$.
%
It follows that $\lambda = \widetilde \kappa$ for some strict
partition
$\kappa \in \Sigma$.  Thus $\ell = |\lambda| = 2|\kappa|$ is even,
which proves that $\Wr(y;z) = P(y;z)^2$ is a square.

We have shown that $(z+a)^{|\kappa|}$ is the largest
power of $(z+a)$ that divides $P(y;z)$, and 
$y \in Y^\circ_\kappa(a)$.
If $y \in Y_\sigma(a)$ then we must have $Y_\sigma(a) 
\supset Y_\kappa(a)$, which implies that $|\sigma| \leq |\kappa|$, and
hence $(z+a)^k$ divides $P(y;z)$.  Conversely, for any
$k \leq |\kappa|$ there exists $\sigma \vdash k$ such that 
$Y_\sigma(a) \supset Y_\kappa(a)$, and so $y \in Y_\sigma(a)$.  
This proves the second assertion.  
The third is proved by the same argument, taking 
$\ell = \dim Y - \deg P(y;z)$ and $a = \infty$.
\end{proof}

If we write $P(y)$ for the class of $P(y;z)$ in projective space
$\PP(\pol{n(n+1)/2})$, then $y \mapsto P(y)$ defines a morphism
of schemes $P : Y \to \PP(\pol{n(n+1)/2})$.

\begin{theorem}
\label{thm:Pflatfinite}
$P$ is a flat, finite morphism.
\end{theorem}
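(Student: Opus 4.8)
The plan is to compare $P$ with the Wronski map on $X = \Gr(n,\pol{2n})$, exploiting the relation $\Wr(y;z) = P(y;z)^2$ from Theorem~\ref{thm:Pschubert}. Set $N := \dim Y = \tfrac{n(n+1)}{2}$, so that $\dim X = 2N$, the Wronski map goes $\Wr : X \to \PP(\pol{2N})$, and $P : Y \to \PP(\pol{N})$. Let $\iota : Y \hookrightarrow X$ denote the inclusion and let $\mathrm{sq} : \PP(\pol{N}) \to \PP(\pol{2N})$ be the squaring morphism $[g(z)] \mapsto [g(z)^2]$ --- a genuine morphism of schemes, since each coefficient of $g^2$ is a quadratic form in the coefficients of $g$ and $g^2 \neq 0$ whenever $g \neq 0$. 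Theorem~\ref{thm:Pschubert} says exactly that $\mathrm{sq} \circ P$ and $\Wr \circ \iota$ agree on closed points; as $Y$ is reduced and $\PP(\pol{2N})$ is separated, they agree as morphisms of schemes, giving a commutative square.

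First I would prove that $P$ is finite. Since $Y$ is a closed subvariety of the Grassmannian $X$ it is projective, hence proper over $\CC$; as $\PP(\pol{N})$ is separated over $\CC$, the morphism $P$ is proper. For quasi-finiteness, observe that for any point $t \in \PP(\pol{N})$ the fibre $P^{-1}(t)$ is contained, set-theoretically, in $(\mathrm{sq} \circ P)^{-1}(\mathrm{sq}(t)) = \iota^{-1}\big(\Wr^{-1}(\mathrm{sq}(t))\big)$, which is finite because $\Wr$ is a finite morphism~\cite{EH}. A proper quasi-finite morphism is finite, so $P$ is finite. (In particular $P$ is closed and $\overline{P(Y)}$ has dimension $\dim Y = N = \dim \PP(\pol{N})$, so $P$ is also surjective.)

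It then remains to establish flatness, for which I would invoke ``miracle flatness'': a morphism from a Cohen--Macaulay scheme to a regular scheme all of whose fibres have dimension equal to $\dim(\text{source}) - \dim(\text{target})$ is flat. Here $Y = \OGpol$ is smooth over $\CC$ of pure dimension $N$, hence Cohen--Macaulay; the target $\PP(\pol{N})$ is regular of dimension $N$; and $P$, being finite, has every fibre of dimension $0 = N - N$. Applying the local criterion at each $y \in Y$, where
$$\dim \mathcal{O}_{Y,y} \;=\; N \;=\; \dim \mathcal{O}_{\PP(\pol{N}),\,P(y)} \;+\; \dim \mathcal{O}_{P^{-1}(P(y)),\,y}\,,$$
we get that $\mathcal{O}_{\PP(\pol{N}),\,P(y)} \to \mathcal{O}_{Y,y}$ is flat, and hence that $P$ is flat.

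The only point requiring any care is the bookkeeping in the first step: verifying that the pointwise identity $\Wr(y;z) = P(y;z)^2$ promotes to an equality of scheme morphisms (which uses that $Y$ is reduced and the target separated) and that $\mathrm{sq}$ is an honest morphism rather than a mere rational map. Given the commutative square, finiteness of $P$ is a formal consequence of finiteness of $\Wr$, and flatness follows at once from miracle flatness, using only that $Y$ is a smooth variety whose dimension equals that of the target of $P$.
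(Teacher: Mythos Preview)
Your argument is correct. Both you and the paper finish with essentially the same ``miracle flatness'' step (the paper packages it as \cite[Ch.~III, Exer.~9.3(a)]{Har}), so the genuine difference lies in how finiteness of the fibres is obtained. The paper writes the fibre over $h(z)=\prod_i (z+a_i)^{k_i}$ explicitly, via Theorem~\ref{thm:Pschubert}, as $\bigcap_i \bigcup_{\sigma_i \vdash k_i} Y_{\sigma_i}(a_i)$ and then invokes Theorem~\ref{thm:maintheorem} to conclude that this intersection is finite. You instead build the commuting square $\mathrm{sq}\circ P = \Wr\circ\iota$ and pull quasi-finiteness back from the classical finiteness of the Wronski map on $\Gr(n,\pol{2n})$ from~\cite{EH}. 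Your route uses only the first sentence of Theorem~\ref{thm:Pschubert} and never touches Theorem~\ref{thm:maintheorem}, so it makes Theorem~\ref{thm:Pflatfinite} logically independent of the MTV-type result; as a side benefit it treats all $h(z)$ uniformly, whereas Theorem~\ref{thm:maintheorem} as stated applies only when the $a_i$ lie in $\RP^1$. The paper's route, in exchange, delivers the explicit Schubert-theoretic description of each fibre as a by-product.
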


\begin{proof}
Let $h(z) = (z+a_1)^{k_1} \dotsm (z+a_s)^{k_s} \in \pol{n(n+1)/2}$.
By Theorem~\ref{thm:Pschubert},
$$P^{-1}(h(z)) = \bigcap_{i=1}^s 
\bigg(\bigcup_{\sigma_i \vdash k_i} Y_{\sigma_i}(a_i)\bigg)\,,$$
which, by Theorem~\ref{thm:maintheorem}, is a finite set.  
Since $P$ is
a projective morphism, this implies that that $P$ is flat
and finite~\cite[Ch. III, Exer. 9.3(a)]{Har}.
\end{proof}

In \cite{Pur} we showed that the properties of the Wronski map
and Theorem~\ref{thm:MTV} can be used to give geometric 
interpretations and proofs of
several combinatorial theorems in the jeu-de-taquin theory,
including the Littlewood-Richardson rule for Grassmannians in 
type $A_n$.  The map $P$ and Theorem~\ref{thm:maintheorem} are the 
appropriate analogues for $\OGn$. With a few modifications, 
it should be possible to use the arguments in~\cite{Pur}
to give geometric proofs of the analogous results in the theory of
shifted tableaux, as developed in~\cite{Hai, Pra, Sag, Ste, Wor}, 
including the Littlewood-Richardson rule for $\OGn$.
The main ingredients required to adapt these proofs are
Theorems~\ref{thm:maintheorem},~\ref{thm:Pschubert} 
and~\ref{thm:Pflatfinite},
and the Gel'fand-Tsetlin toric degeneration of $\OGn$.
The complete details should be straightforward but somewhat 
lengthy, and we will not include them here.


\end{document}